\theoremstyle{plain}
\newtheorem{thm}{Theorem}
\newtheorem{lem}{Lemma}
\theoremstyle{definition}
\newtheorem{exm}{Example}
\theoremstyle{remark}
\newcommand{\N}{\mathbb{N}}
\newcommand{\R}{\mathbb{R}}
\newcommand{\C}{\mathbb{C}}
\newcommand{\D}{\mathbb{D}}
\newcommand{\B}{\mathbb{B}}
\newcommand{\coloneqq}{\mathrel{\mathop:}=}
\providecommand{\abs}[1]{\lvert#1\rvert}
\providecommand{\gabs}[1]{\big\lvert#1\big\rvert}
\DeclareMathOperator{\SUPP}{supp}
\DeclareMathOperator{\DIST}{dist}
\DeclareMathOperator{\RE}{Re}
\providecommand{\zbar}{{\bar{z}}}
\def\moverlay{\mathpalette\mov@rlay}
\def\mov@rlay#1#2{\leavevmode\vtop{%
   \baselineskip\z@skip \lineskiplimit-\maxdimen
   \ialign{\hfil$\m@th#1##$\hfil\cr#2\crcr}}}
\newcommand{\charfusion}[3][\mathord]{
    #1{\ifx#1\mathop\vphantom{#2}\fi
        \mathpalette\mov@rlay{#2\cr#3}
      }
    \ifx#1\mathop\expandafter\displaylimits\fi}
\let\save@mathaccent\mathaccent
\newcommand*\if@single[3]{%
  \setbox0\hbox{${\mathaccent"0362{#1}}^H$}%
  \setbox2\hbox{${\mathaccent"0362{\kern0pt#1}}^H$}%
  \ifdim\ht0=\ht2 #3\else #2\fi
  }
\newcommand*\rel@kern[1]{\kern#1\dimexpr\macc@kerna}
\newcommand*\widebar[1]{\@ifnextchar^{{\wide@bar{#1}{0}}}{\wide@bar{#1}{1}}}
\newcommand*\wide@bar[2]{\if@single{#1}{\wide@bar@{#1}{#2}{1}}{\wide@bar@{#1}{#2}{2}}}
\newcommand*\wide@bar@[3]{%
  \begingroup
  \def\mathaccent##1##2{%
    \let\mathaccent\save@mathaccent
    \if#32 \let\macc@nucleus\first@char \fi
    \setbox\z@\hbox{$\macc@style{\macc@nucleus}_{}$}%
    \setbox\tw@\hbox{$\macc@style{\macc@nucleus}{}_{}$}%
    \dimen@\wd\tw@
    \advance\dimen@-\wd\z@
    \divide\dimen@ 3
    \@tempdima\wd\tw@
    \advance\@tempdima-\scriptspace
    \divide\@tempdima 10
    \advance\dimen@-\@tempdima
    \ifdim\dimen@>\z@ \dimen@0pt\fi
    \rel@kern{0.6}\kern-\dimen@
    \if#31
      \overline{\rel@kern{-0.6}\kern\dimen@\macc@nucleus\rel@kern{0.4}\kern\dimen@}%
      \advance\dimen@0.4\dimexpr\macc@kerna
      \let\final@kern#2%
      \ifdim\dimen@<\z@ \let\final@kern1\fi
      \if\final@kern1 \kern-\dimen@\fi
    \else
      \overline{\rel@kern{-0.6}\kern\dimen@#1}%
    \fi
  }%
  \macc@depth\@ne
  \let\math@bgroup\@empty \let\math@egroup\macc@set@skewchar
  \mathsurround\z@ \frozen@everymath{\mathgroup\macc@group\relax}%
  \macc@set@skewchar\relax
  \let\mathaccentV\macc@nested@a
  \if#31
    \macc@nested@a\relax111{#1}%
  \else
    \def\gobble@till@marker##1\endmarker{}%
    \futurelet\first@char\gobble@till@marker#1\endmarker
    \ifcat\noexpand\first@char A\else
      \def\first@char{}%
    \fi
    \macc@nested@a\relax111{\first@char}%
  \fi
  \endgroup
}
\subjclass[2010]{32T99, 32U99}
\begin{document}

\title[Smoothing of psh functions on unbounded domains]{On smoothing of plurisubharmonic functions on unbounded domains}
\author{T. Harz}
\begin{abstract}
We prove that for every $n \ge 2$, there exists a pseudoconvex domain $\Omega \subset \C^n$ such that $\mathfrak{c}^0(\Omega) \subsetneq \mathfrak{c}^1(\Omega)$, where $\mathfrak{c}^k(\Omega)$ denotes the core of $\Omega$ with respect to $\mathcal{C}^k$-smooth plurisubharmonic functions on $\Omega$. Moreover, we show that there exists a bounded continuous plurisubharmonic function on $\Omega$ that is not the pointwise limit of a sequence of $\mathcal{C}^1$-smooth bounded plurisubharmonic functions on $\Omega$.
\end{abstract}
\maketitle
\section{Introduction}
Let $\Omega \subset \C^n$ be a domain. If $\Omega$ is bounded, then there always exists a strictly plurisubharmonic function $\varphi$ on $\Omega$ that is bounded from above; for example, $\varphi(z) \coloneqq |\cdot|^2$, where $|\cdot|$ denotes the Euclidean norm on $\C^n$, is a function as desired. However, if $\Omega$ is unbounded, then in general there does not exist a plurisubharmonic function on $\Omega$ that is bounded from above and strictly plurisubharmonic on the whole of $\Omega$. The subset of $\Omega$ where all bounded from above plurisubharmonic functions fail to be strictly plurisubharmonic is called the core of $\Omega$, and will be denoted by $\mathfrak{c}(\Omega)$. 

Core sets have been introduced and studied in the series of articles \cite{HarzShcherbinaTomassini17}, \cite{HarzShcherbinaTomassini20}, \cite{HarzShcherbinaTomassini21}. Further results related to cores of complex manifolds were obtained, for example, in \cite{GallagherHarzHerbort17}, \cite{PoletskyShcherbina19}, \cite{Slodkowski19}. To be more precise, core sets have been studied with respect to different subclasses of (bounded from above) plurisubharmonic functions on $\Omega$: In \cite{HarzShcherbinaTomassini17} and \cite{HarzShcherbinaTomassini20}, the authors consider cores with respect to the class of $\mathcal{C}^\infty$-smooth plurisubharmonic functions. In \cite{HarzShcherbinaTomassini21}, also core sets related to $\mathcal{C}^k$-smooth plurisubharmonic functions, $k \ge 0$, are investigated. The papers \cite{PoletskyShcherbina19} and \cite{Slodkowski19} deal with cores related to continuous plurisubharmonic functions, and in \cite{GallagherHarzHerbort17} also a core set with respect to a class of plurisubharmonic functions with weak singularities is introduced.

The goal of this paper is to start an investigation on how these different notions of core sets are related. More precisely, we are interested in the question if core sets with respect to certain subclasses of (bounded from above) plurisubharmonic functions do always coincide, or can be different from one another in general. We will show, in particular, that the cores with respect to continuous and with respect to $\mathcal{C}^1$-smooth plurisubharmonic functions are not always the same, even on pseudoconvex domains.

Before stating the results of this paper in detail, we need to introduce some notations: We write $\mathcal{PSH}(\Omega)$ to denote the set of plurisubharmonic functions on $\Omega$. Moreover, $\mathcal{PSH}^\ast(\Omega) \coloneqq \mathcal{PSH}(\Omega) \cap L^\infty_{loc}(\Omega)$, where $L^\infty_{loc}(\Omega)$ denotes the set of locally bounded functions on $\Omega$, and $\mathcal{PSH}^k(\Omega) \coloneqq \mathcal{PSH}(\Omega) \cap \mathcal{C}^k(\Omega)$ for every $k \in \N_0^\infty \coloneqq \{0, 1, 2, \ldots\} \cup {\infty}$. A function $\varphi \colon \Omega \to [-\infty, \infty)$, $\varphi \not\equiv -\infty$, is called strictly plurisubharmonic if for every $\mathcal{C}^\infty$-smooth function $\theta \colon \Omega \to \R$ with compact support there exists a constant $\varepsilon_0 > 0$ such that for all $\varepsilon \in (-\varepsilon_0, \varepsilon_0)$ the function $\varphi + \varepsilon\theta$ is plurisubharmonic on $\Omega$ (note that in the case that $\varphi$ is $\mathcal{C}^2$-smooth, this definition coincides with the usual definition of strict plurisubharmonicity by means of positive definiteness of the complex Hessian of $\varphi$). We say that $\varphi$ is strictly plurisubharmonic near a point $z \in \Omega$, if $\varphi$ is strictly plurisubharmonic on some open neighborhood $U \subset \Omega$ of $z$. 

The set $\mathfrak{c}(\Omega)$ related to the function space $\mathcal{PSH}(\Omega)$ is then defined by
\begin{equation*}
\mathfrak{c}(\Omega) \coloneqq \left\{z \in \Omega : \begin{minipage}{.61\textwidth} \center every function $\varphi \in \mathcal{PSH}(\Omega)$ that is bounded from \\ above fails to be strictly plurisubharmonic near $z$  \end{minipage} \right\}
\end{equation*}
In the same way, we introduce the sets $\mathfrak{c}^\ast(\Omega)$ and $\mathfrak{c}^k(\Omega)$ related to the function spaces $\mathcal{PSH}^\ast(\Omega)$ and $\mathcal{PSH}^k(\Omega)$, $k \in \N_0^\infty$, respectively.
It follows immediately from the definition that we have a chain of inclusions as follows,
$$ \mathfrak{c}(\Omega) \subset \mathfrak{c}^\ast(\Omega) \subset \mathfrak{c}^0(\Omega) \subset \mathfrak{c}^1(\Omega) \subset \cdots .$$
%
We will show that each of the first three inclusions above can be strict in general. In fact, to see that in some cases $\mathfrak{c}(\Omega) \neq \mathfrak{c}^\ast(\Omega)$ is easy, and it can be observed, e.g., in the following example. 

\begin{exm}\label{ex:coreline}
For generic $C \in \R$, the set
$$ \Omega \coloneqq \big\{(z,w) \in \C^2: \log\abs{w} + \abs{z}^2 + \abs{w}^2 < C \big\} $$
is a strictly pseudoconvex domain with smooth boundary in $\C^2$. Clearly, $\mathfrak{c}(\Omega) = \varnothing$, since $\varphi(z,w) \coloneqq \log\abs{w} + (\abs{z}^2 + \abs{w}^2) - C$ is strictly plurisubharmonic on $\Omega$. On the other hand, observe that $\Omega$ contains the complex line $L \coloneqq \C \times \{0\}$. Hence if $\varphi \in \mathcal{PSH}^\ast(\Omega)$ is bounded from above, then, by Liouville's theorem, there exists a constant $M \in \R$ such that $\varphi \equiv M$ on $L$. In particular, $\varphi$ is not strictly plurisubharmonic at any point in $L$. Thus $L \subset \mathfrak{c}^\ast(\Omega)$. In particular, $\mathfrak{c}(\Omega) \neq \mathfrak{c}^\ast(\Omega)$.
\end{exm}

To see that in general $\mathfrak{c}^\ast(\Omega) \neq \mathfrak{c}^0(\Omega)$ and $\mathfrak{c}^0(\Omega) \neq \mathfrak{c}^1(\Omega)$, even if $\Omega$ is pseudoconvex, is more difficult. This is shown in the following two results of this article.

%
%

\begin{thm} \label{thm:locboundedC0}
For every $n \ge 2$, there exist a pseudoconvex domain $\Omega \subset \C^n$ and a nonempty open set $\omega \subset \Omega$ such that the following assertions hold true:
\begin{enumerate}
 \item There exists a function $\varphi \in \mathcal{PSH}(\Omega) \cap L^\infty_{loc}(\Omega)$ such that $\varphi < 0$ on $\Omega$ and $\varphi$ is strictly plurisubharmonic on $\omega$.
 \item Every function $\varphi \in \mathcal{PSH}(\Omega) \cap \mathcal{C}^0(\Omega)$ such that $\varphi < 0$ on $\Omega$ is constant on $\omega$.
\end{enumerate}
In particular, $\mathfrak{c}^\ast(\Omega) \neq \mathfrak{c}^0(\Omega)$.
\end{thm}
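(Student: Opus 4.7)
The plan is to construct, for $n=2$, an unbounded pseudoconvex Hartogs-type domain $\Omega \subset \C^2$ together with a carefully chosen open subset $\omega$, arranged so that continuous bounded-above psh functions are forced to be constant on $\omega$, while the $L^\infty_{loc}$ class retains enough flexibility to admit a function strictly psh on $\omega$. The general case $n \ge 3$ should follow from the $n=2$ construction by taking products with a Euclidean ball (so that only the flexibility ``in the $\C^2$-factor'' is needed).

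For the construction of $\Omega$ I would seek a smooth psh defining function $\rho$ on $\C^2$ with $\Omega = \{\rho < 0\}$ chosen so that (i) $\Omega$ is unbounded and pseudoconvex, and (ii) $\Omega$ contains a complex-analytic subset $V$ (in the simplest guess, a complex line $L = \C \times \{0\}$, or more realistically a complex curve engineered to make the rigidity below propagate) on which bounded psh functions are automatically constant by the classical Liouville theorem. The open set $\omega$ would be chosen as an open subset of $\Omega$ lying in the ``continuous-psh hull'' of $V$, so that constancy on $V$ transfers to $\omega$ for $\mathcal{C}^0$-psh functions.

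For part (2), given $\varphi \in \mathcal{PSH}(\Omega) \cap \mathcal{C}^0(\Omega)$ with $\varphi < 0$, Liouville applied along $V$ gives $\varphi \equiv M$ on $V$ for some $M \le 0$. To propagate constancy to $\omega$ I would use a Jensen-measure argument for the class $\mathcal{PSH}^0(\Omega)$: for each $p \in \omega$ exhibit a Jensen probability measure $\mu_p$ supported in $V$, yielding $\varphi(p) \le \int_V \varphi\,d\mu_p = M$. Combined with a matching lower bound coming from the sub-mean-value property on analytic disks in $\Omega$ that accumulate onto $V$ (where continuity is used crucially to pass to the limit), this forces $\varphi \equiv M$ on $\omega$.

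For part (1), since (2) rules out a continuous witness, the $\varphi$ must be discontinuous. I would construct it as an upper regularization $\varphi = (\sup_\alpha u_\alpha)^*$ of a carefully chosen family of smooth negative strictly psh functions $u_\alpha$ defined on open pieces of $\Omega$, engineered so that the regularized envelope is bounded above by $0$, strictly psh on $\omega$ in the weak sense of the introduction, but has upward jumps across $V$. The main obstacle is engineering the geometry of $\Omega$ so that the hull/Jensen-measure propagation of (2) actually succeeds on the entire open set $\omega$ (rather than merely on $V$) for continuous psh functions but genuinely fails for $L^\infty_{loc}$-psh functions; the discontinuous example of (1) is the concrete witness of that failure, and achieving this compatibility between (1) and (2) is the technical heart of the theorem.
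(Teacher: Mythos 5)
Your proposal is a strategy outline rather than a proof, and the step you yourself flag as ``the technical heart'' --- engineering $\Omega$ so that constancy propagates from the analytic set $V$ to an open set $\omega$ for continuous psh functions but not for $L^\infty_{loc}$ ones --- is exactly what is missing. A single complex line $L=\C\times\{0\}$ does not suffice: Liouville makes bounded-above psh functions constant on $L$, but $L$ has empty interior and you give no mechanism to spread that constancy to an open set. The paper's construction uses instead a countable family of parallel lines $\{a_j\}\times\C^{n-1}$ with $a_j=(1+\frac{1}{j})e^{i\theta_j}$ and $\{\theta_j\}$ dense in $[0,2\pi]$, all forced into $\Omega$ by building the poles of a weight $\sigma(z)=\sum_j\delta_j\log\abs{z-a_j}$, together with $\log\abs{z}$ and $\frac{1}{2}\log\abs{w-w_0}$, into the defining function, and all tied to a single constant $C$ via the transversal line $\C\times\{w_0\}$. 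Continuity of $\psi$ is then used only to pass from the set $\{a_j\}$, which accumulates on all of $b\D$, to the full set $b\D\times\B$, and the strong maximum principle on the discs $\D\times\{w\}$ (using that $\psi$ also attains the boundary value $C$ at the interior point $z=0$) gives $\psi\equiv C$ on $\D\times\B\supset\omega$. Your Jensen-measure scheme as stated yields only the one-sided bound $\varphi(p)\le M$, and the ``matching lower bound from analytic disks accumulating onto $V$'' is not substantiated; without an explicit geometry there is no reason such disks or measures exist.

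For part (1) no upper-envelope construction is needed: the paper reuses the weight from the defining function, setting $\tilde{\varphi}(z,w)=\sigma(z)+\log\abs{z}+\frac{1}{2}\log\abs{w-w_0}+\abs{z}^2+\frac{1}{2}\abs{w}^2$, which is strictly plurisubharmonic and automatically bounded above on $\Omega$ (indeed $\tilde{\varphi}<4-\frac{1}{2}\abs{w}^2$ there), and then takes $\varphi\coloneqq\max(\tilde{\varphi},-2)$ after checking $\tilde{\varphi}>-2$ on $\omega=\{\frac{1}{2}<\abs{z}<1\}\times\B$. The same logarithmic poles that force the rigid lines into $\Omega$ thus also provide the locally bounded, necessarily discontinuous witness for (1); your proposal keeps these two requirements separate and never confronts their compatibility, which is where the actual content of the theorem lies. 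As it stands the proposal has a genuine gap and cannot be completed without supplying essentially the paper's construction or an equivalent one.
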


\begin{thm} \label{thm:C0C1}
For every $n \ge 2$, there exist a pseudoconvex domain $\Omega \subset \C^n$ and a nonempty open set $\omega \subset \Omega$ such that the following assertions hold true:
\begin{enumerate}
 \item There exists a function $\varphi \in \mathcal{PSH}(\Omega) \cap \mathcal{C}^0(\Omega)$ such that $\varphi < 0$ on $\Omega$ and $\varphi$ is strictly plurisubharmonic on $\omega$.
 \item Every function $\varphi \in \mathcal{PSH}(\Omega) \cap \mathcal{C}^1(\Omega)$ such that $\varphi < 0$ on $\Omega$ is constant on $\omega$.
\end{enumerate}
In particular, $\mathfrak{c}^0(\Omega) \neq \mathfrak{c}^1(\Omega)$.
\end{thm}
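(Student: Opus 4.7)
I will focus on the case $n = 2$; higher dimensions should follow by an analogous construction (or by a product-type argument, tuned so that both boundedness and strict plurisubharmonicity on $\omega$ are preserved in the product). The construction of $\Omega$ would be modeled on that used for Theorem~\ref{thm:locboundedC0}, but modified so as to leave enough ``room'' near $\omega$ to accommodate a continuous, strictly plurisubharmonic, bounded from above function on $\Omega$, while keeping the limiting analytic-disc structure that powers rigidity against $\mathcal{C}^1$ candidates.

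For part~(1), I would produce $\varphi$ by a standard gluing: let $\psi$ be a smooth strictly plurisubharmonic function on a neighborhood of $\overline{\omega}$ with $\psi < 0$ there, and let $\chi \in \mathcal{PSH}(\Omega)$ satisfy $\chi < 0$ on $\Omega$, $\chi < \psi$ on $\omega$, and $\chi > \psi$ on a neighborhood of $\partial \omega$ inside the domain of $\psi$. Then $\varphi := \max\{\psi, \chi\}$, extended by $\chi$ outside the domain of $\psi$, is continuous, plurisubharmonic on $\Omega$, negative on $\Omega$, and strictly plurisubharmonic on $\omega$. The existence of such a $\chi$ hinges on $\Omega$ being ``thick'' enough near $\omega$; this is precisely what distinguishes the present construction from that of Theorem~\ref{thm:locboundedC0}, where a function like $\chi$ cannot be continuous.

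For part~(2), the rigidity argument must use the $\mathcal{C}^1$-smoothness of $\varphi$ in an essential way. The natural mechanism is the presence in $\Omega$ of a family of analytic discs $\{D_t\}_{t \in T}$ accumulating on $\omega$ with $\partial D_t$ tending to $\partial \Omega$, and such that the set of complex tangent directions along these discs is dense in the full complex tangent space at every point of $\omega$. For a $\mathcal{C}^1$-smooth plurisubharmonic $\varphi < 0$ on $\Omega$, subharmonicity of $\varphi|_{D_t}$, combined with boundedness from above and the controlled $\mathcal{C}^1$-behavior of $\varphi$ along $\partial D_t$, should force the complex directional derivative of $\varphi$ along the disc direction to vanish at the accumulation points in $\omega$. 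Continuity of $\nabla \varphi$ then propagates this conclusion to every point of $\omega$ and to every complex direction, giving $\nabla \varphi \equiv 0$ and hence $\varphi$ constant on $\omega$. Merely continuous plurisubharmonic functions escape the argument, since the complex directional derivative along $D_t$ need not exist, and even if it does it need not be continuous in $t$—which is exactly what leaves room for the construction in part~(1).

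The main obstacle is the simultaneous verification of pseudoconvexity of $\Omega$, existence of the disc family needed for part~(2), and existence of the comparison function $\chi$ needed for part~(1). Achieving all three demands will likely require a careful Hartogs-type refinement of the construction used for Theorem~\ref{thm:locboundedC0}, with a defining function designed to make the complex-tangential accumulation by discs dense while still leaving room for a nonconstant, bounded, continuous plurisubharmonic function on $\Omega$ near $\omega$.
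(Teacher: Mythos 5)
Your proposal is a plan rather than a proof, and the two load-bearing ingredients are both missing. For part (2), the mechanism you describe --- analytic discs $D_t$ accumulating on $\omega$, with subharmonicity along $D_t$ plus boundedness ``forcing'' the complex directional derivative to vanish at accumulation points --- is left entirely heuristic, and I do not see how to substantiate it as stated: boundedness from above of a subharmonic function on a disc gives no pointwise control of its gradient at interior points, and nothing in your setup explains where the derivative information would come from. The paper's actual mechanism is different and genuinely one-dimensional: the domain is a Hartogs domain containing the affine complex lines $\{a_j\}\times\C^{n-1}$ and $\C\times\{w_0\}$, where $a_j=(1+\tfrac1j)e^{i\theta_j}$ lies \emph{outside} $\bar\D$ with arguments dense in $[0,2\pi]$; Liouville's theorem forces any negative psh $\varphi$ to equal a single constant $c$ on all these lines, so on each slice $\D\times\{w\}$, $w\in\B$, one has a $\mathcal{C}^1$ subharmonic function equal to $c$ at all the $a_j$. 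The key lemma is then: such a function is $\equiv c$ on $\bar\D$, proved via the Hopf lemma --- if $v<c$ somewhere inside, then $v<c$ everywhere inside, Hopf gives $v\le c-\varepsilon\,\DIST(\cdot,b\D)$, and $\mathcal{C}^1$-smoothness forces $v>c$ at points just \emph{outside} $\bar\D$, contradicting $v(a_j)=c$ with $|a_j|\to1$. The fact that the rigidity points $a_j$ sit outside the disc whose interior you want to control is the whole point, and it is absent from your picture of discs ``accumulating on $\omega$.''

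For part (1), you assume the existence of the comparison function $\chi$ (``hinges on $\Omega$ being thick enough'') without constructing it, but this is exactly where the work is. The paper builds a continuous subharmonic $u$ on $\C$ with $u=|z|^2$ on $\bar\D$ and $u\equiv1$ on small discs $D_j$ around each $a_j$, as a decreasing limit of functions of the form $\max(|z|^2+\varepsilon_j\chi(\cdot)\log|z-a_j|,\,1)$; the resulting $u$ is continuous but \emph{not} $\mathcal{C}^1$ (precisely because of the $\log$ poles and the max), which is why it evades the rigidity of part (2). The constancy of $u$ on the discs $D_j$ is then what makes the Hartogs extension $\tilde\varphi(z,w)=u(z)$ for $|w|<\tfrac52$, $\tilde\varphi=1$ for $|w|\ge\tfrac52$, well-defined, continuous and psh on $\Omega$ (the two definitions agree on the overlap because $\Omega\cap\{2\le|w|\le3\}$ projects into $\bigcup_j D_j$). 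Strict plurisubharmonicity on $\omega=\D\times\B$ is then obtained by adding a small multiple of $\lambda(|z|^2)|w|^2$, which requires its own Hessian computation. None of these quantitative compatibility conditions between the defining function of $\Omega$, the discs $D_j$, and the glued function appear in your outline, so as it stands the proposal does not establish either assertion.
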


The proofs of Theorem 1 and Theorem 2 are given in sections 2 and 3, respectively. In section 4 we note a consequence with respect to non-approximability of bounded plurisubharmonic functions on unbounded domains.

\section{Proof of \Cref{thm:locboundedC0}}
In what follows, $n \ge 2$ is fixed. If $(z_0,w_0) \in \C \times \C^{n-1}$ and $r > 0$, then $\D(z_0,r) \coloneqq \{z \in \C : \abs{z-z_0} < r\}$ and $\B(w_0,r) \coloneqq \{w \in \C^{n-1} : \abs{w-w_0} < r\}$. Moreover, $\D \coloneqq \D(0,1)$ and $\B \coloneqq \B(0,1)$. 

Let $\{\theta_j\}_{j=1}^\infty$ be a dense subset of $[0, 2\pi]$ and let $a_j \coloneqq (1+\frac{1}{j})e^{i\theta_j}$. Define
$$ 
\sigma(z) \coloneqq \sum_{j=1}^\infty \delta_j \log\abs{z-a_j}, 
$$
where $(\delta_j)_{j=1}^\infty$ is a sequence of constants $\delta_j > 0$ converging to zero so fast that
\begin{itemize}
  \item[(1)] $\sigma \colon \C \to [-\infty, \infty)$ is subharmonic,
  \item[(2)] $\abs{\sigma} < 1$ on $\bar{\D}$.
\end{itemize}
Further, fix a point $w_0 \in \C^{n-1}$ such that $\abs{w_0} = 2$, and let
$$ 
\Omega \coloneqq \big\{(z,w) \in \C \times \C^{n-1} : \sigma(z) + \log\abs{z} + \textstyle\frac{1}{2}\log\abs{w-w_0} + \abs{z}^2 + \abs{w}^2 < 4 \big\}. 
$$
Observe that, after possibly passing to a suitable connected component, $\Omega$ is a pseudoconvex domain such that
\begin{enumerate}
  \item[(3)] $(A \cup \{0\}) \times \C^{n-1} \subset \Omega$, where $A \coloneqq \{a_j\}_{j=1}^\infty$,
  \item[(4)] $\C \times \{w_0\} \subset \Omega$,
  \item[(5)] $\bar{\D} \times \bar{\B} \subset \Omega$;
\end{enumerate}
in order to see (5), note that $\log\abs{w-w_0} < \log 3 < 2$ if $\abs{w} < 1$. Define $\tilde{\varphi} \colon \C^n \to [-\infty, \infty)$ as
$$ \tilde{\varphi}(z,w) \coloneqq \sigma(z) + \log\abs{z} + \textstyle\frac{1}{2}\log\abs{w-w_0} + \abs{z}^2 + \frac{1}{2}\abs{w}^2. $$
Then observe that
\begin{enumerate}
  \item[(6)] $\tilde{\varphi}$ is strictly plurisubharmonic,
  \item[(7)] $\tilde{\varphi} > -2$ on $\{\frac{1}{2} < \abs{z} < 1\} \times \B$, and
  \item[(8)] $\tilde{\varphi} < -2$ on $\Omega \cap \{\abs{w} > 4\}$;
\end{enumerate}
in order to see (8), note that $\tilde\varphi <  4 - \frac{1}{2} \abs{w}^2$ on $\Omega$. Thus $\varphi \coloneqq \max(\tilde{\varphi}, -2)$ is a bounded plurisubharmonic function on $\Omega$ such that, according to (6)  and (7), $\varphi$ is strictly plurisubharmonic on 
$$ \omega \coloneqq \{\textstyle\frac{1}{2} < \abs{z} < 1\} \times \B \subset \Omega. $$
In particular, $\omega \cap \mathfrak{c}^\ast(\Omega) = \varnothing$. 

On the other hand, let now $\psi \colon \Omega \to (-\infty, 0)$ be a continuous plurisubharmonic function. Then, by (3) and (4), $\psi \equiv C$ on $(A \cup \{0\}) \times \C^{n-1}$ for some constant $C \in \R$. Thus, by (5), by continuity of $\psi$, and since $b\D \subset \bar{A}$, we conclude that $\psi \equiv C$ on $(b\D \cup \{0\}) \times \B$. An application of the maximum principle to the functions $\psi(\,\cdot\,,w)|_{\D}$, $w \in \B$, thus implies that $\psi \equiv C$ on $\D \times \B$. In particular, $\psi$ fails to be strictly plurisubharmonic on $\omega \subset \D \times \B$, and hence $\omega \subset \mathfrak{c}^0(\Omega)$.

\section{Proof of \Cref{thm:C0C1}}
The proof of \Cref{thm:C0C1} relies on the following simple lemma in dimension 1. Here, for $U \subset \C$ open we write $\mathcal{SH}(U)$ for the set of subharmonic functions on $U$.

\begin{lem} \label{thm:shdimension1} Fix a dense subset $\{\theta_j\}_{j=1}^\infty \subset [0,2\pi]$ and set $a_j \coloneqq (1+\frac{1}{j})e^{i\theta_j}$. Then the following assertions hold true:
\begin{enumerate}
 \item There exists a function $u \in \mathcal{SH}(\C) \cap \mathcal{C}^0(\C)$ such that $u(a_j) = 1$ for every $j \in \N$, and $u(z) = \abs{z}^2$ on $\bar{\D}$.
 \item For every function $v \in \mathcal{SH}(U) \cap \mathcal{C}^1(U)$, where $U \subset \C$ is an open neighborhood of $\bar{\D}$, the following statement is true: if $v(a_j) = 1$ for every $j \in \N$ such that $a_j \in U$, then $v \equiv 1$ on $\bar{\D}$.
\end{enumerate}
\end{lem}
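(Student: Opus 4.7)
My plan is to start from the naive candidate $u_0(z) \coloneqq \abs{z}^2$, which already has the right restriction to $\bar{\D}$ but overshoots to $\abs{a_j}^2 = (1+\tfrac{1}{j})^2$ rather than $1$ at each $a_j$, and then to modify $u_0$ surgically inside pairwise disjoint tiny disks around each $a_j$. Let $D_j \coloneqq \D(a_j, r_j)$ with radii $r_j > 0$ chosen small enough that the closed disks $\bar D_j$ are pairwise disjoint and all contained in $\C \setminus \bar{\D}$. Define $u \coloneqq u_0$ on $\C \setminus \bigcup_j D_j$; on each $D_j$ set $u \coloneqq \tilde h_j + \psi_j$, where $\tilde h_j$ is the harmonic extension of $\abs{z}^2|_{\partial D_j}$ into $D_j$---explicitly $\tilde h_j(z) = r_j^2 - \abs{a_j}^2 + 2\RE(\bar a_j z)$, so $\tilde h_j(a_j) = r_j^2 + \abs{a_j}^2$---and $\psi_j(z) \coloneqq \max\!\bigl( c_j \log(\abs{z-a_j}/r_j),\, 1 - \tilde h_j(a_j) \bigr)$ is a clipped logarithmic pit that vanishes on $\partial D_j$ and equals the negative constant $1 - \tilde h_j(a_j)$ at $a_j$. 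By construction $u$ is continuous, satisfies $u(a_j) = 1$ and $u|_{\bar{\D}} = \abs{z}^2$, and is subharmonic inside each $D_j$ and on $\C \setminus \bigcup_j \bar D_j$. The only delicate point is the jump across $\partial D_j$: a direct computation gives the outward jump in the normal derivative as $2 r_j - c_j/r_j$, which is non-negative as soon as $c_j \le 2 r_j^2$. With that choice the distributional Laplacian $\Delta u$ acquires a non-negative surface measure on each $\partial D_j$, so $u$ is globally subharmonic.

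\textbf{Plan for (2).} Here the strategy is to upgrade the pointwise equalities $v(a_j) = 1$ into first-order Cauchy data on $b\D$ and then to invoke a Hopf-type conclusion. Density of $\{\theta_j\}$ together with continuity of $v$ forces $v \equiv 1$ on $b\D$, and the maximum principle then gives $v \le 1$ on $\bar{\D}$. For each $j$ large enough that $a_j \in U$, the $\mathcal{C}^1$ function $t \mapsto v(t e^{i\theta_j})$ on $[1, 1 + \tfrac{1}{j}]$ takes the value $1$ at both endpoints, so Rolle's theorem produces a radius $t_j^\ast \in (1, 1 + \tfrac{1}{j})$ at which $\partial_r v(t_j^\ast e^{i\theta_j}) = 0$. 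Continuity of $\nabla v$ and density of $\{\theta_j\}$ then force $\partial_r v \equiv 0$ on $b\D$.

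\textbf{The main obstacle.} What remains is to conclude $v \equiv 1$ on $\bar{\D}$ from the combination $v \le 1$ on $\bar{\D}$, $v|_{b\D} \equiv 1$, and $\partial_r v|_{b\D} \equiv 0$. Morally this is Hopf's lemma for the superharmonic function $1 - v$, but the classical statement of Hopf's lemma assumes $\mathcal{C}^2$ regularity while $v$ is only $\mathcal{C}^1$ with distributional Laplacian just a positive measure. I would argue directly by a barrier comparison: assuming $v \not\equiv 1$, the strong maximum principle forces $v < 1$ strictly on $\D$, and for any fixed $p \in b\D$ I would choose an interior tangent disk $E \subset \D$ through $p$ and compare $v$ on a suitable annular region inside $E$ with $1 - \varepsilon \varphi$, where $\varphi$ is the classical exponential barrier (subharmonic outside a small concentric subdisk for $\alpha$ large enough) and $\varepsilon > 0$ is sufficiently small. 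The comparison principle for subharmonic functions then yields $v \le 1 - \varepsilon \varphi$ on the annulus, and the $\mathcal{C}^1$-regularity of $v$ converts this into a strictly positive lower bound on $\partial_r v(p)$, contradicting what we established.
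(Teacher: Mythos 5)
Your part (2) is essentially the paper's argument: density forces $v \equiv 1$ on $b\D$, the strong maximum principle gives $v < 1$ on $\D$ unless $v \equiv 1$, and a Hopf-type boundary estimate produces a strictly positive outward radial derivative on $b\D$, contradicting $v(a_j) = 1$ with $a_j \to b\D$. The paper simply cites Hopf's lemma, which for subharmonic functions requires no regularity of $v$ beyond the comparison principle (the barrier proof you sketch \emph{is} the standard proof), so your worry about $\mathcal{C}^2$ hypotheses is unfounded; also the Rolle detour is unnecessary, since $\partial_r v(p) \ge c > 0$ on $b\D$ already forces $v(a_j) > 1$ for large $j$ directly. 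This part is fine.

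Part (1) has a genuine gap. Your jump computation across $\partial D_j$ establishes subharmonicity of $u$ only on $\C \setminus b\D$, where the family $\{\bar D_j\}$ is locally finite. But the disks accumulate on $b\D$, and subharmonicity at points of $b\D$ does \emph{not} follow from continuity plus subharmonicity on both sides: the function $\min(\abs{z}^2,1)$ is continuous on $\C$ and subharmonic off $b\D$, yet fails the sub-mean value inequality at every point of $b\D$. In your construction each $D_j$ contains a pit of depth about $\abs{a_j}^2 - 1 \approx 2/j$, and at a point $p \in b\D$ the total negative contribution of the pits to the mean of $u$ over the circle $\abs{z-p} = \rho$ is of the same order $\rho^2$ as the gain coming from $\abs{z}^2$; whether the inequality survives depends on quantitative smallness of the $r_j$ (something like $\sum_{j \ge N} r_j = O(N^{-2})$) and of the $c_j$, and you impose nothing beyond pairwise disjointness and $c_j \le 2r_j^2$, which is not enough. (The continuity of $u$ at $b\D$ also needs a word, via the squeeze $1 - 2\abs{a_j}r_j \le u \le (\abs{a_j}+r_j)^2$ on $D_j$, but that part does work out.)

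The paper avoids this entirely by producing $u$ as a \emph{decreasing} limit of functions $u_m$, each of which modifies only finitely many disks and each of which is manifestly subharmonic because the cutoff $\chi$ makes the modified expression literally equal to $\abs{z}^2$ near $\partial D_j$, so no gluing or jump computation is needed; a decreasing limit of subharmonic functions is subharmonic, and continuity at $b\D$ follows from $1 \le u \le \abs{z}^2$ on $\C \setminus \D$. Your construction can be repaired along the same lines — take $c_j = 2r_j^2$ exactly, so that $\tilde h_j + \psi_j \le \abs{z}^2$ on $D_j$ (note that for $c_j < 2r_j^2$ your $u$ actually exceeds $\abs{z}^2$ near $\partial D_j$) and the partial modifications form a decreasing sequence of subharmonic functions — but as written the subharmonicity of $u$ at $b\D$ is unproved.
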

\begin{proof}
(1) Fix a sequence $(r_j)$ of constants $r_j \in (0,1)$ such that $\bar{\D}(a_j,r_j) \cap \bar{\D} = \emptyset$ for every $j \in \N$, and $\bar{\D}(a_j,r_j) \cap \bar{\D}(a_k,r_k) = \emptyset$ for every $j,k \in \N$ such that $j \neq k$. Further, choose a smooth function $\chi \colon \C \to [0,\infty)$ such that $\SUPP \chi \subset \D$ and $\chi \equiv 1$ near $0$. Finally, let $(\varepsilon_j)$ be a sequence of constants $\varepsilon_j > 0$ such that for every $j \in \N$ the function $\abs{z}^2 + \varepsilon_j\chi(\abs{z-a_j}/r_j)\log\abs{z-a_j}$ is strictly subharmonic on $\D(a_j,r_j)$.\medskip
 
Define functions $u_m \in \mathcal{SH}(\C) \cap \mathcal{C}^0(\C)$ by
$$ u_m(z) \coloneqq 
       \left\{
        \begin{array}{c@{,\quad}l}
         \max\Big(\abs{z}^2 + \varepsilon_j\chi\Big(\textstyle\frac{\abs{z-a_j}}{r_j}\Big)\log\abs{z-a_j},\, 1\Big) & \text{if }z \in \bigcup_{j=1}^m \D(a_j,r_j) \\
         \abs{z}^2 & \text{if }z \notin \bigcup_{j=1}^m \D(a_j,r_j)
        \end{array} 
       \right..
$$
Note that the sequence $(u_m)$ is monotonically decreasing. Thus $u \coloneqq \lim_{m \to \infty} u_m$ is a well-defined subharmonic function on $\C$. Moreover, since $\abs{z}^2 \ge u_m(z) \ge 1$ on $\C \setminus \D$ for every $m \in \N$, it is easy to observe that $u$ is continuous. By construction, $u(a_j) = 1$ for every $j \in \N$. \medskip

\noindent (2) If $v(a_j) = 1$ for every $j \in \N$ such that $a_j \in U$, then, by the choice of $(a_j)$ and continuity of $v$, it follows that $v \equiv 1$ on $b\D$. Since $v$ is subharmonic, we conclude that $v \le 1$ on $\D$.

Assume, in order to get a contradiction, that there exists a point $z_0 \in  \D$ such that $v(z_0) < 1$. The set $M \coloneqq \{z \in \D : v(z) = 1\}$ is closed by continuity of $v$, and it is open, since $v$ is subharmonic. Thus $M = \emptyset$, since $z_0 \in \D \setminus M$, and hence $v < 1$ on $\D$. 
By the classical Hopf Lemma, see \cite{Hopf52}, it then follows that 
$$ 
v(z) \le 1 - c \DIST(z,b\D) \quad \mathrm{for}\ z \in \bar\D
$$ 
for some constant $c > 0$. Since $v$ is $\mathcal{C}^1$-smooth, this implies that 
$$ 
v(z) > 1 \quad \mathrm{for}\ z \in \C \setminus \bar\D \mathrm{\ close\ enough\ to}\ b\D. 
$$
This contradicts the fact that $\{a_j\}_{j=1}^\infty \subset \C \setminus \bar\D$, $\lim_{j \to \infty}\abs{a_j} = 1$ and $v(a_j) = 1$ whenever $a_j \in U$. Thus $v \equiv 1$ on $\bar\D$. 
\end{proof}

In what follows, let $u \colon \Omega \to \R$ be the continuous subharmonic function constructed in the proof of  \Cref{thm:shdimension1}. Observe that not only $u(a_j) = 1$ for every $j \in \N$, but in fact for every $j \in \N$ there exists a disc $D_j \subset\subset \C \setminus \bar\D$ centered at $a_j$ such that $u \equiv 1$ on $D_j$. Define
$$ 
\sigma(z) \coloneqq \sum_{j=1}^\infty \delta_j \log\abs{z-a_j}, 
$$
where $(\delta_j)_{j=1}^\infty$ is a sequence of constants $\delta_j > 0$ converging to zero so fast that 
\begin{enumerate}
 \item[(1)] $\sigma \colon \C \to [-\infty, \infty)$ is subharmonic,
 \item[(2)] $\sigma < \frac{1}{4}$ on $\bar{\D}$,
 \item[(3)] $\sigma \ge -1$ on $\C \setminus \bigcup_{j=1}^\infty D_j$.
\end{enumerate} 
Further, fix a point $w_0 \in \C^{n-1}$ such that $\abs{w_0} = 4$, and let
$$ 
\Omega \coloneqq \{ (z,w) \in \C \times \C^{n-1} : \sigma(z) + \log\abs{w-w_0} + \abs{z}^2 + \abs{w}^2 < 3\}. 
$$
Observe that, after possibly passing to a suitable connected component, $\Omega$ is a pseudoconvex domain containing the set
$$ 
 E \coloneqq \big(\{a_j\}_{j=1}^\infty \times \C^{n-1}\big) \cup (\C \times \{w_0\}) 
$$
such that
\begin{enumerate}
 \item[(4)] $\Omega \cap \{\abs{w} \le 3\}$ is bounded,
 \item[(5)] $\Omega \cap \{\abs{w} \le 1\} \supset \bar\D \times \bar{\mathbb{B}}$,
 \item[(6)] $\Omega \cap \{2 \le \abs{w} \le 3\} \subset \big(\bigcup_{j=1}^\infty D_j\big) \times \C^{n-1}$.
\end{enumerate}
In order to see (4), note that $\sigma(z) > 0$ for $\abs{z} > \frac{5}{2}$, and $\log\abs{w-w_0} \ge 0$ for $\abs{w} \le 3$. For (5), observe that $\sigma < \frac{1}{4}$ on $\bar{\D}$ by (2), and $\log\abs{w-w_0} \le \log 5 < \frac{3}{4}$ for $w \in \bar{\B}$. Finally, for (6) note that $\log\abs{w-w_0} + \abs{z}^2 + \abs{w}^2 \ge 4$ for $2 \le \abs{w} \le 3$, so the claim follows from (3).

Let now $\psi \in \mathcal{PSH}(\Omega) \cap \mathcal{C}^1(\Omega)$ such that $\psi$ is bounded from above on $\Omega$.
From Liouville's theorem, it follows that $\psi \equiv c$ on $E$ for some $c \in \R$. Fix $w \in \B$. Then $\psi(a_j,w) = c$ for every $j \in \N$, and, since $\psi$ is $\mathcal{C}^1$-smooth and $\bar{\D} \times \{w\} \subset \Omega$ by (5), it thus follows from part (2) of \Cref{thm:shdimension1} that $\psi(\,\cdot\,,w) \equiv c$ on $\D \times \{w\}$. Since $w \in \B$ was arbitrary, this shows that $\psi \equiv c$ on $\omega \coloneqq \D \times \B$. 

It remains to show that there exists a function $\varphi \in \mathcal{PSH}(\Omega) \cap \mathcal{C}^0(\Omega)$ such that $\varphi$ is bounded from above on $\Omega$ and $\varphi$ is strictly plurisubharmonic on $\omega$. To that end, define $\tilde\varphi \colon \Omega \to \R$ by
$$ 
\tilde{\varphi}(z,w) \coloneqq \left\{\begin{array}{c@{,\quad}l} u(z) & \abs{w} < \frac{5}{2} \\ 1 & \abs{w} \ge \frac{5}{2} \end{array}\right..
$$
By the choice of the discs $D_j$ in the definition of the function $u$ above, and due to property (6), we see that the function $\tilde\varphi$ is continuous and plurisubharmonic on $\Omega$. Morevoer, by (4), it is bounded from above, and, by the choice of $u$, we have $\tilde{\varphi}(z,w) = \abs{z}^2$ on $\omega$. Let $\lambda \colon [0,\infty) \to [0,1]$ be smooth such that $\{\lambda > 0\} = [0,1)$ and $\lambda \equiv 1$ near $0$, and define $\theta \colon \Omega \to [0,\infty)$ by 
$$ 
\theta(z,w) \coloneqq \left\{\begin{array}{c@{,\quad}l} \lambda(\abs{z}^2)\abs{w}^2 & \abs{w} < \frac{5}{2} \\ 0 & \abs{w} \ge \frac{5}{2} \end{array}\right..
$$
Observe that $\theta$ is smooth and bounded from above on $\Omega$; here, the smoothness follows from the fact that $(\bar{\D} \times \{\abs{w} = \frac{5}{2}\}) \cap \Omega = \varnothing$ by (6). Thus, for every $c > 0$, the function
$$ 
\varphi \coloneqq \tilde{\varphi} + c\theta
$$
is continuous and bounded on $\Omega$. Moreover, if $c$ is chosen small enough, then it follows from the next Lemma (in the case $C \coloneqq \frac{1}{c}$) that $\varphi$ is plurisubharmonic on $\Omega$ and strictly plurisubharmonic on $\omega$. This concludes the proof of the theorem.

\begin{lem}
Let $\lambda \colon [0,\infty) \to [0,1]$ be smooth such that $\{\lambda > 0\} = [0,1)$ and $\lambda \equiv 1$ near $0$. Then for every $R>0$ there exists $C>0$ such that
$$ 
S(z_1,z') \coloneqq \lambda(\abs{z_1}^2)\abs{z'}^2 + C\abs{z_1}^2 
$$
is plurisubharmonic on $\C \times \B(0,R)$ and strictly plurisubharmonic on $\D \times \B(0,R)$; here, $z = (z_1, z') \in \C \times \C^{n-1}$. 
\end{lem}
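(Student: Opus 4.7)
The plan is to compute the complex Hessian $(S_{k\bar l})$ of $S$ directly, estimate its Levi form using Cauchy--Schwarz, and then show that a single large choice of $C$ makes the remaining scalar inequality hold uniformly. Writing $a \coloneqq |z_1|^2$, $s \coloneqq |z'|^2$, and $\lambda,\lambda',\lambda''$ for the values at $a$, the nonzero Hessian entries are
$$
S_{z_1\bar z_1} = (\lambda''a+\lambda')\,s + C, \quad S_{z_1\bar z_k} = \lambda'\,\bar z_1\, z_k\ (k\ge 2), \quad S_{z_k\bar z_l} = \lambda\,\delta_{kl}\ (k,l\ge 2),
$$
and Cauchy--Schwarz on the off-diagonal coupling yields, for $\xi = (\xi_1,\xi') \in \C\times\C^{n-1}$,
$$
\sum_{k,l} S_{z_k\bar z_l}\,\xi_k\bar\xi_l \;\ge\; \alpha\,|\xi_1|^2 \;-\; 2\beta\,|\xi_1|\,|\xi'| \;+\; \lambda\,|\xi'|^2,
$$
with $\alpha \coloneqq (\lambda''a+\lambda')\,s + C$ and $\beta \coloneqq |\lambda'|\sqrt{as}$.

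In the region $a \ge 1$, smoothness of $\lambda$ together with $\{\lambda>0\}=[0,1)$ forces $\lambda$ and all its derivatives to vanish identically on $[1,\infty)$; the lower bound collapses to $C|\xi_1|^2 \ge 0$, which already proves plurisubharmonicity there. On $\D \times \B(0,R)$ we have $a < 1$ and hence $\lambda>0$, and the lower bound is the quadratic form in $(|\xi_1|,|\xi'|)$ with matrix $\bigl(\begin{smallmatrix}\alpha & -\beta \\ -\beta & \lambda\end{smallmatrix}\bigr)$. Its positive definiteness reduces (modulo the easy requirement $\alpha>0$) to $\alpha\lambda > \beta^2$, equivalently
$$
C \;>\; s\left(\frac{a(\lambda')^2}{\lambda} - \lambda''a - \lambda'\right).
$$

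The main obstacle is the factor $1/\lambda$, which threatens to become singular as $a\to 1^-$. The saving observation is Glaeser's inequality applied to the nonnegative $C^2$ function $\lambda$: one has $(\lambda')^2 \le 2\lambda\,\|\lambda''\|_\infty$ pointwise, so the ratio $(\lambda')^2/\lambda$ is bounded on $[0,1)$ by $2\|\lambda''\|_\infty$. Combined with the trivial bounds on $\lambda'$ and $\lambda'' a$ coming from smoothness and compact support, the right-hand side above is dominated by a constant depending only on $R$. Choosing $C$ larger than this constant (and large enough to ensure $\alpha > 0$) completes the proof.
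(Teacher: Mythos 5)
Your proposal is correct and follows essentially the same route as the paper: compute the Levi form, control the off-diagonal term by Cauchy--Schwarz, and observe that the key ratio $(\lambda')^2/\lambda$ stays bounded on $\{\lambda>0\}$ so that a single large $C$ works. The only cosmetic differences are that you verify positivity via the $2\times 2$ determinant criterion rather than completing the square, and you justify $(\lambda')^2 \le L\lambda$ by Glaeser's inequality where the paper invokes L'Hospital's rule --- the same auxiliary fact either way.
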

\begin{proof}
For $z \in \C^n$ and $\xi \in \C^n$ let 
$$
H_S(z,\xi) \coloneqq \sum_{j,k=1}^\infty \frac{\partial^2 S}{\partial z_j \partial \zbar_k}(z)\xi_j\bar{\xi}_k.
$$
Then
\begin{equation*}\begin{split}
 H_S(z,\xi) = 
 &\big\{[\lambda''(\abs{z_1}^2)\abs{z_1}^2 + \lambda'(\abs{z_1}^2)] \cdot \abs{z'}^2 + C\big\} \cdot \abs{\xi_1}^2 \\
 &+ 2\RE\{\lambda'(\abs{z_1}^2)\bar{z}_1\xi_1 \langle z', \xi' \rangle\} + \lambda(\abs{z_1}^2) \cdot \abs{\xi'}^2, 
\end{split}\end{equation*}
where $\langle z', \xi' \rangle \coloneqq \sum_{j=2}^n z_j\bar{\xi}_j$. Let $R>0$ be fixed and let $z \in \C \times \B(0,R)$. Then
$$ 
\gabs{\lambda'(\abs{z_1}^2) \bar{z}_1\xi_1 \langle z', \xi' \rangle} \le R\abs{\lambda'(\abs{z_1}^2)} \cdot \abs{\xi_1}\abs{\xi'},
$$
and thus for $C > 0$ large enough it follows that
$$
H_S(z,\xi) \ge \frac{C}{2}\cdot\abs{\xi_1}^2 - R\abs{\lambda'(\abs{z_1}^2)} \cdot \abs{\xi_1}\abs{\xi'} + \lambda(\abs{z_1}^2) \cdot \abs{\xi'}^2.
$$
But an easy application of L'Hospital's rule shows that $(\lambda')^2 \le L\lambda$ for some constant $L > 0$, which implies that
$$
0 \le \frac{1}{2}\left(R\sqrt{L}\abs{\xi_1} - \sqrt{\lambda(\abs{z_1}^2)}\abs{\xi'}\right)^2 \le \frac{R^2L}{2} \cdot \abs{\xi_1}^2 - R \abs{\lambda'(\abs{z_1}^2)} \cdot \abs{\xi_1}\abs{\xi'} + \frac{\lambda(\abs{z_1}^2)}{2} \cdot \abs{\xi'}^2.
$$
Hence $H_S(z,\xi) \ge \varepsilon (\abs{\xi_1}^2 + \lambda(\abs{z_1}^2) \abs{\xi'}^2)$ for some constant $\varepsilon > 0$, provided that $C$ is chosen large enough.
\end{proof}

\section{Remark}
Recall that, by Theorem 5.5 in \cite{FornaessNarasimhan80}, for every Stein space $X$ and every plurisubharmonic function $\varphi$ on $X$ there exists a sequence of $\mathcal{C}^\infty$-smooth plurisubharmonic functions $\{\varphi_j\}_{j=1}^\infty$ such that $\lim_{j\to\infty} \varphi_j = \varphi$ (in fact, in the same theorem, it is shown that one can additionally guarantee that the sequence $(\varphi_j)$ consists of strictly plurisubharmonic functions and is monotonically decreasing). However, \Cref{thm:locboundedC0} and \Cref{thm:C0C1} show that in general it is not possible to approximate bounded plurisubharmonic functions by continuous bounded (from above) plurisubharmonic functions, or bounded continuous plurisubharmonic functions by $\mathcal{C}^1$-smooth bounded (from above) plurisubharmonic functions, respectively, even on pseudoconvex domains $\Omega \subset \C^n$ and even if the convergence is only required to be pointwise. (Note that the functions $\varphi$ constructed in the proofs of \Cref{thm:locboundedC0} and \Cref{thm:C0C1}, respectively, are bounded also from below.)

\end{document}

Z. S lodkowski, Pseudoconcave decompositions in complex manifolds, Advances in
complex geometry, 239 - 259, Contemp. Math., 735, Amer. Math. Soc., Providence,
RI, 2019.